\numberwithin{equation}{section}
\definecolor{shadecolor}{gray}{0.875}
\definecolor{col}{RGB}{42, 95, 151}
\newtheorem{thm}{\bf Theorem}[section]
\newtheorem{lem}[thm]{\bf Lemma}
\newtheorem{cor}[thm]{\bf Corollary}
\newtheorem{q}[thm]{\bf Question}
\newtheorem*{thm*}{\bf Theorem}
\newtheorem*{cor*}{\bf Corollary}
\theoremstyle{definition}
\newtheorem{rem}[thm]{\it Remark}
\newtheorem*{df*}{\bf Definition}
\newtheorem*{not*}{\bf Notation}
\newtheorem*{dfs*}{\bf Definitions}
\newtheorem*{ack*}{\bf Acknowledgements}
\newtheorem*{dfrem*}{\bf Definition and Remark}
\newtheorem*{notc*}{\bf Notation and Convention}
\def\P{\mathbb{P}}
\def\C{\mathbb{C}}
\def\Z{\mathbb{Z}}
\DeclareMathOperator{\Supp}{Supp}
\DeclareMathOperator{\Ker}{Ker}
\DeclareMathOperator{\Coker}{Coker}
\DeclareMathOperator{\alg}{alg}
\DeclareMathOperator{\Ext}{Ext}
\DeclareMathOperator{\MHS}{MHS}
\DeclareMathOperator{\rat}{rat}
\theoremstyle{definition}
\title{Factorization of the Abel--Jacobi maps}
\author{Fumiaki Suzuki}
\address{UCLA Mathematics Department, Box 951555, Los Angeles, CA, 90095-1555}
\email{suzuki@math.ucla.edu}
\begin{document}



\maketitle

\begin{prelims}

\DisplayAbstractInEnglish

\bigskip

\DisplayKeyWords

\medskip

\DisplayMSCclass

\bigskip

\languagesection{Fran\c{c}ais}

\bigskip

\DisplayTitleInFrench

\medskip

\DisplayAbstractInFrench

\end{prelims}


\newpage

\setcounter{tocdepth}{2}

\tableofcontents


\section{Introduction}
For a smooth complex projective variety $X$, 
the Abel--Jacobi map $AJ^{p}$ provides a fundamental tool to study codimension $p$ cycles on $X$.
It is a homomorphism of abelian groups
\[
AJ^{p}\colon CH^{p}(X)_{\hom}\rightarrow J^{p}(X),
\]
where
$CH^{p}(X)_{\hom}$ is the group of codimension $p$ cycles homologous to zero modulo rational equivalence
and
\[
J^{p}(X)= H^{2p-1}(X, \C)/(H^{2p-1}(X,\Z(p)) + F^{p}H^{2p-1}(X, \C))
\] 
is the $p^\mathrm{th}$ Griffiths intermediate Jacobian.
Although the Abel--Jacobi map $AJ^{p}$ is transcendental by nature,
it is well-known that we get an algebraic theory by restricting it to the subgroup $A^{p}(X)\subset CH^{p}(X)_{\hom}$ of cycles algebraically equivalent to zero.
More precisely,
the image $J^{p}_{a}(X)\subset J^{p}(X)$ of $A^{p}(X)$ under $AJ^{p}$ is an abelian variety,
and the induced map
\[
\psi^{p}\colon A^{p}(X)\rightarrow J^{p}_{a}(X),
\]
which we also call Abel--Jacobi, 
satisfies the following 
\cite{Gr, Lie}:
for any smooth connected 
projective 
variety $S$ with a base point $s_{0}$ and for any codimension $p$ cycle $\Gamma$ on $S\times X$,
the composition 
\[
\left\{\begin{array}{l}S\rightarrow A^{p}(X) \rightarrow J^{p}_{a}(X)\\
s\mapsto \psi^{p}(\Gamma_{s}-\Gamma_{s_{0}})
\end{array}\right.
\]
is a morphism of algebraic varieties.

Generally, for a given abelian variety $A$, a homomorphism $\phi\colon A^{p}(X)\rightarrow A$ with the analogous property is called {\it regular} (this definition goes back to the work of Samuel \cite{Sa}). Remarkably, the Abel--Jacobi map $\psi^{p}$ factors through another regular homomorphism due to a theorem of Walker \cite{W},
which was originally proved as an application of the theory of Lawson homology and morphic cohomology. 
The purpose of this note is to give a direct proof of the theorem.

\begin{thm}[\protect{\cite[Corollary~5.9]{W}}]\label{UAJw}
For a smooth projective variety $X$,
the Abel--Jacobi map $\psi^{p}$ factors as
\[
\xymatrix{
 &J\left(N^{p-1}H^{2p-1}(X, \Z(p))\right)\ar[d]^-{\pi^{p}}\\
 A^{p}(X)\ar[ur]^-{\widetilde{\psi}^{p}}\ar[r]_-{\psi^{p}}& J^{p}_{a}(X),\\
}
\]
where $J\left(N^{p-1}H^{2p-1}(X, \Z(p))\right)$ is the intermediate Jacobian for the 
pure Hodge structure of weight $-1$
given by the $(p-1)$-stage of the coniveau filtration
\[
N^{p-1}H^{2p-1}(X,\Z(p)) = \Ker\left(H^{2p-1}(X,\Z(p))\rightarrow \varinjlim_{Z\in \mathcal{Z}^{p-1}} H^{2p-1}(X-Z, \Z(p))\right),
\]
 $\pi^{p}$ is a natural isogeny,
and $\widetilde{\psi}^{p}$ is a surjective regular homomorphism.
\end{thm}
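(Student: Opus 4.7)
The plan is to prove the theorem in three stages. \emph{Stage 1: showing that $J\bigl(N^{p-1}H^{2p-1}(X, \Z(p))\bigr)$ is an abelian variety and that the natural map $J\bigl(N^{p-1}H^{2p-1}\bigr) \to J^p(X)$ has finite kernel.} By purity, $N^{p-1}H^{2p-1}(X, \Q)$ equals the image of a Gysin pushforward $i_*\colon H^1(\widetilde{Z}, \Q) \to H^{2p-1}(X, \Q(p-1))$ for some $i\colon \widetilde{Z} \to X$ obtained by composing a resolution of singularities with the inclusion of a codimension-$(p-1)$ closed subvariety of $X$. Consequently $N^{p-1}H^{2p-1}(X, \Q(p))$ is a polarizable pure Hodge structure of weight $-1$ and level at most one, and its intermediate Jacobian is an abelian variety. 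The inclusion $N^{p-1}H^{2p-1} \hookrightarrow H^{2p-1}$ of Hodge structures being rationally injective, the induced map to $J^p(X)$ has finite kernel.

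\emph{Stage 2: construction of $\widetilde{\psi}^p$.} The key factorization lemma is: for any smooth projective curve $C$ and any $\Gamma \in CH^p(C \times X)$, the induced Hodge-structure morphism $\Gamma_*\colon H^1(C, \Z(1)) \to H^{2p-1}(X, \Z(p))$ lands in $N^{p-1}H^{2p-1}(X, \Z(p))$. Indeed, writing $\Gamma_*$ as pullback-cup-pushforward through a resolution of the support $|\Gamma|$, the resulting projection to $X$ has image of codimension at least $p-1$, so the image of $\Gamma_*$ lies in the coniveau piece by the very definition of $N^{p-1}$. Given this, for any $\alpha \in A^p(X)$ presented as $\alpha = \Gamma_*(c - c_0)$ with $(C, c_0, \Gamma)$ as above, I set
\[
\widetilde{\psi}^p(\alpha) \;\coloneqq\; \Gamma_*\bigl(AJ^1_C(c - c_0)\bigr) \;\in\; J\bigl(N^{p-1}H^{2p-1}(X, \Z(p))\bigr).
\]
The equality $\pi^p \circ \widetilde{\psi}^p = \psi^p$ and independence from the presentation follow from functoriality of the classical Abel--Jacobi together with the injectivity (modulo torsion) of $\pi^p$ from Stage~1. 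Regularity of $\widetilde{\psi}^p$ is obtained by running the same construction in families.

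\emph{Stage 3: $\pi^p$ is an isogeny and $\widetilde{\psi}^p$ is surjective.} Stage~2 already yields $J^p_a(X) \subseteq \pi^p\bigl(J(N^{p-1}H^{2p-1})\bigr)$. For the opposite inclusion, I would show the image of the natural map $J\bigl(N^{p-1}H^{2p-1}\bigr) \to J^p(X)$ lies in $J^p_a(X)$: with $\widetilde{Z} \to X$ as in Stage~1, the Chow-theoretic pushforward sends $\Pic^0(\widetilde{Z})$ into $A^p(X)$, and the resulting regular homomorphism $\Pic^0(\widetilde{Z}) \to J^p_a(X)$ coincides, via $\Pic^0(\widetilde{Z}) = J(\widetilde{Z})$, with the cohomological Gysin map $i_*\colon J(\widetilde{Z}) \to J^p(X)$. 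Because $i_*$ at the $H^1$ level surjects onto $N^{p-1}H^{2p-1}(X, \Q(p-1))$ by construction, the image of $J(\widetilde{Z}) \to J^p(X)$ coincides with the image of $J\bigl(N^{p-1}H^{2p-1}\bigr) \to J^p(X)$, whence the latter lies in $J^p_a(X)$. Combined with the finite-kernel condition of Stage~1, this shows $\pi^p$ is an isogeny. Surjectivity of $\widetilde{\psi}^p$ is then formal: its image is a sub-abelian variety of $J\bigl(N^{p-1}H^{2p-1}\bigr)$ mapping onto $J^p_a(X)$ under the isogeny $\pi^p$, hence of full dimension and equal to $J\bigl(N^{p-1}H^{2p-1}\bigr)$. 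The main technical input of the whole argument is the factorization lemma in Stage~2, which hinges on purity/semi-purity of the coniveau filtration; the rest is a careful arrangement of functoriality of Abel--Jacobi and formal properties of abelian varieties.
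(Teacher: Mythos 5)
There is a genuine gap at the heart of Stage 2: the well-definedness of $\widetilde{\psi}^{p}$. You define $\widetilde{\psi}^{p}(\alpha)=\Gamma_{*}\bigl(AJ^{1}_{C}(c-c_{0})\bigr)$ from a chosen presentation $\alpha=\Gamma_{*}(c-c_{0})$ and assert that independence of the presentation follows ``from functoriality of the classical Abel--Jacobi together with the injectivity (modulo torsion) of $\pi^{p}$.'' But two presentations of the same class only give values whose images under $\pi^{p}$ agree, hence values differing by an element of the finite group $\Ker(\pi^{p})$ --- and this kernel is in general nonzero (Ottem--Suzuki). Showing that this torsion discrepancy always vanishes --- in particular that $\Gamma_{*}\bigl(AJ^{1}_{C}(c-c_{0})\bigr)=0$ whenever the cycle $\Gamma_{*}(c-c_{0})$ is rationally equivalent to zero --- is precisely the nontrivial content of the theorem; ``injective modulo torsion'' gives no control over an ambiguity that lives entirely in torsion. (The same issue already arises at the level of cycles: two presentations producing the same cycle, not merely the same class, must be compared.) The divisibility-of-$A^{p}(X)$ argument in the paper's Remark proves uniqueness of a lift once one exists; it cannot be used to manufacture one.

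The paper avoids this by constructing $\widetilde{\psi}^{p}$ intrinsically on cycles rather than on presentations: for each codimension $p$ closed subset $Y\subset X$ there is a short exact sequence of mixed Hodge structures
\[
0\rightarrow N^{p-1}H^{2p-1}(X,\Z(p))\rightarrow N^{p-1}H^{2p-1}(X-Y,\Z(p))\rightarrow Z^{p}_{Y}(X)_{\alg}\rightarrow 0,
\]
whose exactness on the right rests on the Bloch--Ogus identification of the image of the coniveau differential $E^{p-1,p}_{1}\rightarrow E^{p,p}_{1}$ with $Z^{p}(X)_{\alg}$; the boundary map for $\Ext^{\bullet}_{\MHS}(\Z(0),-)$ then defines $\widetilde{\psi}^{p}$ on $Z^{p}(X)_{\alg}$ with no choices, and invariance under rational equivalence comes for free from $J^{1}(\P^{1})=0$ via compatibility with correspondences. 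Your Stages 1 and 3 (level-one structure of $N^{p-1}H^{2p-1}$, finiteness of $\Ker(\pi^{p})$, surjectivity via a resolution $\widetilde{Z}\rightarrow X$ of a supporting codimension $p-1$ subset) run parallel to the paper's surjectivity argument and are fine in outline, as is your observation that $\Gamma_{*}H^{1}(C,\Z(1))$ lands in $N^{p-1}H^{2p-1}(X,\Z(p))$; but none of this repairs the missing construction. To fix Stage 2 you need either the Bloch--Ogus input above or some other intrinsic, presentation-free description of the lift.
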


\begin{rem}
The Walker map $\widetilde{\psi}^{p}$ is a unique lift of the Abel--Jacobi map $\psi^{p}$.
This follows from the fact that $A^{p}(X)$ is divisible \cite[Lemma 7.10]{BO} and $\Ker(\pi^{p})$ is finite.
\end{rem}

Theorem \ref{UAJw} is related to a classical question of Murre (\emph{cf.}~\cite[Section 7]{M3} and \cite[p. 132]{GMV}), asking whether the Abel--Jacobi map $\psi^{p} \colon A^{p}(X)\rightarrow J^{p}_{a}(X)$ is {\it universal} among all regular homomorphisms $\phi\colon A^{p}(X)\rightarrow A$, that is, whether every such $\phi$ factors through $\psi^{p}$.
This is known to hold for $p=1$ by the theory of the Picard variety, for $p=\dim X$ by the theory of the Albanese variety, and for $p=2$ as proved by Murre \cite{M1,M2} (see \cite{K} for the correction of a gap in the original proof) using the Merkurjev-Suslin theorem \cite{MS}.
Nevertheless, it was recently observed by Ottem--Suzuki \cite{OS} that $\psi^{p}$ is no longer universal in general for $3\leq p\leq \dim X-1$, which settled Murre's question. 
In fact, they constructed a $4$-fold on which the Walker map $\widetilde{\psi}^{3}$ is universal and the isogeny $\pi^{3}$ has non-zero kernel.
We note that the $4$-fold was obtained from a certain pencil of Enriques surfaces with non-algebraic integral Hodge classes of non-torsion type.

Theorem \ref{UAJw} has several other consequences.
It was recently proved by Voisin \cite{V2} that 
the $(n-2)$-stage of the coniveau and strong coniveau filtrations 
\[N^{n-2}H^{2n-3}(X,\Z), \,\widetilde{N}^{n-2}H^{2n-3}(X,\Z)\] always coincides modulo torsion on a rationally connected $n$-fold $X$ 
(see \cite{BeO} for the definition and properties of the strong coniveau filtration).
This result follows from a geometric argument involving families of semi-stable maps from curves to $X$, 
combined with an analogue of the Roitman theorem for the Walker maps $\widetilde{\psi}^{p}$ on a smooth projective variety with small Chow groups \cite[Theorems~1.1 and~A.3]{S}.
The Roitman-type theorem
also allows us to describe the torsion part of the kernel of the Abel--Jacobi maps $\psi^{p}$ in terms of the coniveau under the same assumption on the Chow groups.

Our new proof of Theorem \ref{UAJw} only depends on the Bloch-Ogus theory \cite{BO} and the theory of intermediate Jacobians of mixed Hodge structures.
This simplifies to a large extent the original argument due to Walker, which relies on the full machinery of Lawson homology and morphic cohomology.

We work over the complex numbers throughout.

\subsection*{Acknowledgments}

The author would like to thank Henri Gillet for interesting discussions. He also thanks the anonymous referees for their careful reading and comments.

\section{Proof of the main theorem}

Before beginning the proof, we review the construction of the Abel--Jacobi maps using mixed Hodge structures \cite{J}
(the reader can consult \cite{Del1,Del2} for basic knowledge about mixed Hodge structures). 

For a mixed Hodge structure $(H, W_{\bullet}, F^{\bullet})$, we define its intermediate Jacobian $J(H)$ as the extension group
\[
J(H)=\Ext^{1}_{\MHS}(\Z(0), H)
\]
in the abelian category $\MHS$ of mixed Hodge structures\footnote{We denote by $\Z(m)$ the Hodge structure of Tate $(2\pi i)^{m}\cdot \Z$, which is a pure Hodge structure of weight $-2m$.}.
If $H$ is pure of weight $-1$, then $J(H)$ is isomorphic to a complex torus
\[
H_{\C}/(H_{\Z}+F^{0}H_{\C}).
\]
Let $X$ be a smooth projective variety.
Then the cohomology group $H^{2p-1}(X,\Z(p))$ has a pure Hodge structure of weight $-1$, therefore we have 
\[J^{p}(X)=J\left(H^{2p-1}(X,\Z(p))\right).\]
On the other hand, for a codimension $p$ closed subset $Y\subset X$,
the long exact sequence for cohomology groups with supports gives a short exact sequence\footnote{
For a variety $X$, we denote by $Z^{p}(X)$ the group of codimension $p$ cycles on $X$ 
and by $Z^{p}(X)_{\rat}$ (resp. $Z^{p}(X)_{\alg}$, $Z^{p}(X)_{\hom}$) the subgroup of cycles rationally equivalent to zero (resp. algebraically equivalent to zero, homologous to zero) on $X$.
For a codimension $p$ closed subset $Y\subset X$, we denote by $Z^{p}_{Y}(X)$ the subgroup of cycles supported on $Y$;
the groups $Z^{p}_{Y}(X)_{\rat}, Z^{p}_{Y}(X)_{\alg}$, and $Z^{p}_{Y}(X)_{\hom}$ are accordingly defined.}
\[
0\rightarrow H^{2p-1}(X,\Z(p))\rightarrow H^{2p-1}(X-Y,\Z(p))\rightarrow Z^{p}_{Y}(X)_{\hom}\rightarrow 0.
\]
This is a short exact sequence of mixed Hodge structures, where $Z^{p}_{Y}(X)_{\hom}$ has the trivial Hodge structure.
Then the boundary map in the long exact sequence for $\Ext^{i}_{\MHS}(\Z(0), -)$ determines a map
\[
Z^{p}_{Y}(X)_{\hom}\rightarrow J^{p}(X).
\]
Now we take the direct limit 
over
all codimension $p$ closed subsets of $X$ 
to obtain a map
\[
Z^{p}(X)_{\hom}\rightarrow J^{p}(X).
\]
This coincides with the Abel--Jacobi map $AJ^{p}$ defined by using currents.

\subsection{Construction}
We will use a variant of the above construction to construct the Walker maps.
For a codimension $p$ closed subset $Y\subset X$, the long exact sequence for cohomology groups with supports gives a commutative diagram
\[
\xymatrixcolsep{1pc}
\xymatrix{
0 \ar[r] & H^{2p-1}(X, \Z(p)) \ar[r] \ar[d]^{f} & H^{2p-1}(X- Y, \Z(p)) \ar[r]\ar[d] &Z^{p}_{Y}(X)_{\hom}\ar[r]\ar[d] & 0\\
0 \ar[r] & \varinjlim_{Z\in \mathcal{Z}^{p-1}} H^{2p-1}(X- Z, \Z(p))\ar@{=}[r] &\varinjlim_{Z\in \mathcal{Z}^{p-1}} H^{2p-1}(X- Y- Z, \Z(p))\ar[r]&0\ar[r]&0,\\
}
 \]
 where $\mathcal{Z}^{p-1}$ is the set of codimension $p-1$ closed subsets of $X$.
By the snake lemma, we have an exact sequence
\[
0\rightarrow N^{p-1}H^{2p-1}(X,\Z(p))\rightarrow N^{p-1}H^{2p-1}(X- Y, \Z(p)) \rightarrow Z^{p}_{Y}(X)_{\hom} \xrightarrow{\delta_{Y}} \Coker(f).
\]

We prove that $\Ker(\delta_{Y})=Z^{p}_{Y}(X)_{\alg}$.
We have a commutative diagram with exact rows and columns
\[
\xymatrixcolsep{1pc}
\xymatrix{
& \varinjlim_{(Y,Z)\in \mathcal{Z}^{p}/\mathcal{Z}^{p-1}} H^{2p-1}_{Z-Y}(X- Y, \Z(p)) \ar@{=}[r]\ar[d] & \varinjlim_{(Y,Z)\in \mathcal{Z}^{p}/\mathcal{Z}^{p-1}} H^{2p-1}_{Z-Y}(X-Y, \Z(p))\ar[d]^{\partial}\\
 H^{2p-1}(X, \Z(p)) \ar[r]\ar@{=}[d] & \varinjlim_{Y\in \mathcal{Z}^{p}} H^{2p-1}(X-Y, \Z(p)) \ar[r]\ar[d] & \varinjlim_{Y\in \mathcal{Z}^{p}} H^{2p}_{Y}(X, \Z(p))=Z^{p}(X)\ar[d]\\
 H^{2p-1}(X, \Z(p)) \ar[r]^<<<<<<{f} & \varinjlim_{Z\in \mathcal{Z}^{p-1}}H^{2p-1}(X-Z, \Z(p)) \ar[r]& \varinjlim_{Z\in \mathcal{Z}^{p-1}} H^{2p}_{Z}(X, \Z(p)),\\
}
\]
where 
$\mathcal{Z}^{p}$ is the set of codimension $p$ closed subsets of $X$ and 
$\mathcal{Z}^{p}/\mathcal{Z}^{p-1}$ is the set of pairs $(Y,Z)\in \mathcal{Z}^{p}\times \mathcal{Z}^{p-1}$ such that $Y\subset Z$.
Then the result follows from
 the diagram 
 and the fact that 
the map $\partial$, which can be identified with the differential $E^{p-1,p}_{1}\rightarrow E^{p,p}_{1}$ of the coniveau spectral sequence, 
has the image
$Z^{p}(X)_{\alg}\subset Z^{p}(X)$ \cite[Theorem 7.3]{BO}.

As a consequence, we have a short exact sequence
 \[
 0\rightarrow N^{p-1}H^{2p-1}(X,\Z(p))\rightarrow N^{p-1}H^{2p-1}(X- Y, \Z(p)) \rightarrow Z^{p}_{Y}(X)_{\alg}\rightarrow 0.
 \]
 This is a short exact sequence of mixed Hodge structures, 
 where $N^{p-1}H^{2p-1}(X,\Z(p))$ has a pure Hodge structure of weight $-1$ and $Z^{p}_{Y}(X)_{\alg}$ has the trivial Hodge structure.
Then the boundary map in the long exact sequence for $\Ext_{\MHS}^{i}(\Z(0), -)$ determines a map
 \[
 \widetilde{\psi}^{p}_{Y}\colon Z^{p}_{Y}(X)_{\alg}\rightarrow J\left(N^{p-1}H^{2p-1}(X,\Z(p))\right),
 \]
where $J\left(N^{p-1}H^{2p-1}(X,\Z(p))\right)$ is a complex torus.
Now we take the direct limit 
to obtain a map
 \[
 \widetilde{\psi}^{p}\colon Z^{p}(X)_{\alg}\rightarrow J\left(N^{p-1}H^{2p-1}(X,\Z(p))\right),
 \]
 which we call the Walker map.
 
 \subsection{Basic Properties}
To finish the proof of Theorem \ref{UAJw}, we 
need to establish several basic properties of the Walker map $\widetilde{\psi}^{p}$.
\begin{lem}\label{Al0}
 We have a commutative diagram
 \[
 \xymatrix{
 Z^{p}(X)_{\alg}\ar[r]^<<<<<<{\widetilde{\psi}^{p}}\ar@{^{(}->}[d]& J\left(N^{p-1}H^{2p-1}(X,\Z(p))\right)\ar[d]^{\pi^{p}}\\
 Z^{p}(X)_{\hom}\ar[r]^{AJ^{p}} & J^{p}(X),\\
 }
 \]
  where $\pi^{p}$ is induced by the inclusion $N^{p-1}H^{2p-1}(X,\Z(p))\subseteq H^{2p-1}(X,\Z(p))$.
 \end{lem}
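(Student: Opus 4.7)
The plan is to exhibit a morphism between the two defining short exact sequences of mixed Hodge structures (one for $AJ^p$, one for $\widetilde{\psi}^p$) at the level of each codimension $p$ closed subset $Y\subset X$, and then appeal to naturality of the boundary map of $\Ext^{\bullet}_{\MHS}(\Z(0),-)$ together with passage to the direct limit.

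More concretely, for a fixed $Y \in \mathcal{Z}^{p}$ I would write down the commutative diagram of short exact sequences of mixed Hodge structures
\[
\xymatrix{
0 \ar[r] & N^{p-1}H^{2p-1}(X,\Z(p)) \ar[r]\ar@{^{(}->}[d] & N^{p-1}H^{2p-1}(X-Y,\Z(p)) \ar[r]\ar@{^{(}->}[d] & Z^{p}_{Y}(X)_{\alg} \ar[r]\ar@{^{(}->}[d] & 0\\
0 \ar[r] & H^{2p-1}(X,\Z(p)) \ar[r] & H^{2p-1}(X-Y,\Z(p)) \ar[r] & Z^{p}_{Y}(X)_{\hom} \ar[r] & 0,\\
}
\]
whose rows are, respectively, the Walker short exact sequence established in the previous subsection and the localization short exact sequence used to define $AJ^{p}$. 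The vertical maps on the first two columns are the tautological inclusions of the coniveau subspaces, and the vertical map on the right is the inclusion $Z^{p}_{Y}(X)_{\alg}\hookrightarrow Z^{p}_{Y}(X)_{\hom}$. Commutativity of the two squares is immediate from the way the Walker sequence was obtained, since the horizontal maps in the top row are by construction the restrictions to the $N^{p-1}$-subspaces of the corresponding horizontal maps of the bottom row (and, on the right, the Walker quotient map is just the restriction of the localization boundary).

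Applying the functor $\Ext^{\bullet}_{\MHS}(\Z(0),-)$ and using the naturality of the connecting homomorphism, I obtain a commutative square
\[
\xymatrix{
Z^{p}_{Y}(X)_{\alg} \ar[r]^-{\widetilde{\psi}^{p}_{Y}}\ar@{^{(}->}[d] & J\bigl(N^{p-1}H^{2p-1}(X,\Z(p))\bigr) \ar[d]^{\pi^{p}}\\
Z^{p}_{Y}(X)_{\hom} \ar[r]^-{AJ^{p}_{Y}} & J^{p}(X),\\
}
\]
where the right vertical map is exactly the one induced by the inclusion $N^{p-1}H^{2p-1}(X,\Z(p))\subseteq H^{2p-1}(X,\Z(p))$, i.e.\ $\pi^{p}$. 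Finally, passing to the direct limit over all $Y \in \mathcal{Z}^{p}$ yields the desired commutative square in the statement.

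I do not foresee a real obstacle: the only non-formal input is the identification of the kernel of $\delta_{Y}$ with $Z^{p}_{Y}(X)_{\alg}$, which was already established in the construction; everything else reduces to the naturality of $\Ext^{1}_{\MHS}(\Z(0),-)$ applied to a morphism of short exact sequences. The mildly delicate point worth spelling out is simply that the right-hand vertical map, once computed on $\Ext^{1}$, is literally $\pi^{p}$ and not merely a map in the same isogeny class.
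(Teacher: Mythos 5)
Your proposal is correct and follows essentially the same route as the paper: exhibit the morphism of short exact sequences of mixed Hodge structures (coniveau row mapping into the full localization row) for each $Y\in\mathcal{Z}^{p}$, apply $\Ext^{\bullet}_{\MHS}(\Z(0),-)$ with naturality of the connecting map, and pass to the direct limit. The paper's proof is exactly this diagram followed by the same two formal steps, so there is nothing to add.
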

 \begin{proof}
We have a commutative diagram of short exact sequences of mixed Hodge structures
\[
\xymatrixcolsep{1pc}
\xymatrix{
0 \ar[r] & N^{p-1}H^{2p-1}(X, \Z(p)) \ar[r] \ar[d] & N^{p-1}H^{2p-1}(X- Y, \Z(p)) \ar[r]\ar[d] &Z^{p}_{Y}(X)_{\alg}\ar[r]\ar[d] & 0\\
0 \ar[r] & H^{2p-1}(X, \Z(p)) \ar[r] & H^{2p-1}(X- Y, \Z(p)) \ar[r] &Z^{p}_{Y}(X)_{\hom}\ar[r] & 0\\
}
 \]
for any codimension $p$ closed subset $Y\subset X$.
 The assertion follows by applying $\Ext^{i}_{\MHS}(\Z(0),-)$ and taking the direct limit.
\end{proof}
 
 \begin{lem}\label{Al1}
 Let $C$ be a smooth projective curve and $\Gamma$ be a codimension $p$ cycle on $C\times X$ each of whose components dominates $C$.
 Then we have a commutative diagram:
 \[
 \xymatrix{
 Z^{1}(C)_{\hom}\ar[r]^{AJ^{1}}\ar[d]^{\Gamma_{*}}& J^{1}(C)\ar[d]^{\Gamma_{*}}\\
 Z^{p}(X)_{\alg}\ar[r]^<<<<<<{\widetilde{\psi}^{p}} & J\left(N^{p-1}H^{2p-1}(X,\Z(p))\right).\\
 }
 \]
 \end{lem}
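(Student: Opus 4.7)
The plan is to realize the square as the result of applying $\Ext^{1}_{\MHS}(\Z(0),-)$ to a morphism of short exact sequences of mixed Hodge structures whose vertical maps are induced by the correspondence $\Gamma$.

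Fix $D\in Z^{1}(C)_{\hom}$ and a finite set $T\subset C$ containing $\Supp(D)$. Since each component of $\Gamma$ dominates $C$, the subsets
\[
Y:=p_{2}\bigl(\Gamma\cap (T\times X)\bigr),\qquad Z:=p_{2}(\Supp \Gamma)
\]
have codimensions $p$ and $p-1$ in $X$, respectively. Choose a smooth projective alteration $\widetilde{\Gamma}\to\Gamma$ with projections $\pi_{1},\pi_{2}$, and let $\Gamma_{*}:=\pi_{2*}\circ\pi_{1}^{*}$ with the appropriate Tate twist. Then $\Gamma_{*}$ defines MHS morphisms $H^{1}(C,\Z(1))\to H^{2p-1}(X,\Z(p))$ and $H^{1}(C-T,\Z(1))\to H^{2p-1}(X-Y,\Z(p))$; both factor through cohomology supported on $Z$, hence land in the coniveau piece $N^{p-1}$. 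On the cycle level, $\Gamma_{*}$ sends $Z^{1}_{T}(C)_{\hom}$ into $Z^{p}_{Y}(X)_{\alg}$, since every degree-zero divisor on $C$ is algebraically equivalent to zero and algebraic equivalence is preserved under correspondences.

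These data assemble into a morphism of short exact sequences of MHS
\[
\xymatrixcolsep{0.6pc}\xymatrix{
0\ar[r]& H^{1}(C,\Z(1))\ar[r]\ar[d]^{\Gamma_*}& H^{1}(C-T,\Z(1))\ar[r]\ar[d]^{\Gamma_*}& Z^{1}_{T}(C)_{\hom}\ar[r]\ar[d]^{\Gamma_*}& 0\\
0\ar[r]& N^{p-1}H^{2p-1}(X,\Z(p))\ar[r]& N^{p-1}H^{2p-1}(X-Y,\Z(p))\ar[r]& Z^{p}_{Y}(X)_{\alg}\ar[r]& 0,
}
\]
whose left square commutes by functoriality of $\pi_{1}^{*}$ and $\pi_{2*}$ under open restriction, and whose right square commutes by compatibility of the correspondence action with cycle classes in the localization sequence. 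Applying $\Ext_{\MHS}^{\bullet}(\Z(0),-)$ and invoking naturality of the connecting homomorphism produces the commutative square of the lemma, with $AJ^{1}$ on top and $\widetilde{\psi}^{p}_{Y}$ on the bottom, evaluated on the class of $D$. Passing to the direct limit over $T$ (and hence $Y$) concludes the argument.

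The main technical point is the definition of the middle vertical arrow as an MHS morphism landing in $N^{p-1}H^{2p-1}(X-Y,\Z(p))$: one must verify that $\pi_{2*}\circ\pi_{1}^{*}$ descends along the open inclusion $\widetilde{\Gamma}\setminus \pi_{2}^{-1}(Y)\hookrightarrow \widetilde{\Gamma}\setminus \pi_{1}^{-1}(T)$ and factors through cohomology with supports on $Z$; both follow from standard functoriality of pullback and proper Gysin pushforward along $\pi_{2}$.
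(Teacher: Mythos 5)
Your proposal is correct in outline and follows the same core strategy as the paper: realize the square as a morphism of localization short exact sequences of mixed Hodge structures induced by $\Gamma$, apply $\Ext^{\bullet}_{\MHS}(\Z(0),-)$, invoke naturality of the connecting homomorphism, and pass to the direct limit. Where you differ is in the construction of the vertical maps. The paper factors the correspondence action through $C\times X$ in three explicit steps --- pullback $(\pi_{C})^{*}$, cup product with $[\Gamma]$ realized via restriction to the (possibly singular) support $G=\Supp(\Gamma)$ and cap product in Borel--Moore homology, then $(\pi_{X})_{*}j^{*}$ --- each step giving a morphism of localization sequences; this keeps everything inside the Bloch--Ogus/Poincar\'e-duality-with-supports formalism that the paper cites and makes the landing in the coniveau pieces visible at the level of supports on $G$. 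You instead pass to a desingularization $\widetilde{\Gamma}$ of the support and set $\Gamma_{*}=\pi_{2*}\pi_{1}^{*}$, which is arguably cleaner but shifts the burden onto two points you only assert: (i) that $\pi_{2*}\pi_{1}^{*}$ computed on $\widetilde{\Gamma}$ agrees with the cycle-theoretic correspondence action --- this holds for a resolution (birational over each component, weighted by multiplicities), but an \emph{alteration} of degree $d>1$, as you write, would introduce a factor of $d$; over $\C$ you should simply take a resolution; and (ii) the compatibility of $\pi_{1}^{*}$, the intermediate open restriction $\widetilde{\Gamma}\setminus\pi_{2}^{-1}(Y)\hookrightarrow\widetilde{\Gamma}\setminus\pi_{1}^{-1}(T)$, and the proper Gysin map $\pi_{2*}$ with the boundary maps of the localization sequences, which is exactly the Poincar\'e duality axiom the paper invokes explicitly. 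Two small corrections: $Y=p_{2}(\Gamma\cap(T\times X))$ and $Z=p_{2}(\Supp\Gamma)$ have codimension \emph{at least} $p$ and $p-1$ respectively (which suffices, after enlarging if necessary), not exactly those values; and the statement concerns all of $Z^{1}(C)_{\hom}$ at once, so the limit over $T$ should be taken over all finite subsets rather than anchored to a single divisor $D$, though this changes nothing of substance.
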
 
 \begin{proof}
 We freely use the fact that the Betti cohomology and the Borel-Moore homology form a Poincar\'e duality theory with supports (see \cite{Ba,BO} for the axioms).
Let $\pi_{C}\colon C\times X\rightarrow C$ (resp. $\pi_{X}\colon C\times X\rightarrow X$) be the projection to $C$ (resp. $X$).
For a codimension one closed subset $Y\subset C$, setting $Y'=\pi_{C}^{-1}(Y)$,
we have a commutative diagram
\begin{equation}\label{Ad1}
\xymatrixcolsep{1pc}
\xymatrix{
0 \ar[r] & H^{1}(C, \Z(1)) \ar[r] \ar[d]^{(\pi_{C})^{*}} & H^{1}(C- Y, \Z(1)) \ar[r]\ar[d]^{(\pi_{C})^{*}} &Z^{1}_{Y}(C)_{\hom}\ar[r]\ar[d]^{(\pi_{C})^{*}} & 0\\
0 \ar[r] & H^{1}(C\times X, \Z(1)) \ar[r] & H^{1}(C\times X- Y', \Z(1)) \ar[r] &Z^{1}_{Y'}(C\times X)_{\hom}\ar[r] & 0.\\
}
 \end{equation}
Similarly, setting $G=\Supp(\Gamma)$ and $Y''=Y'\cap G$,
we have a commutative diagram
\[
\xymatrixcolsep{1pc}
\xymatrix{
0 \ar[r] & H^{1}(C\times X, \Z(1)) \ar[r] \ar[d]^{\cup\Gamma} & H^{1}(C\times X- Y', \Z(1)) \ar[r]\ar[d]^{(\cup\Gamma)'} &Z^{1}_{Y'}(C\times X)_{\hom}\ar[r]\ar[d]^{\cup \Gamma} & 0\\
0 \ar[r] & H^{2p+1}(C\times X, \Z(p+1)) \ar[r] & H^{2p+1}(C\times X- Y'' , \Z(p+1)) \ar[r] &Z^{p+1}_{Y''}(C\times X)_{\hom}\ar[r] & 0,\\
}
 \]
where, letting $i\colon G-Y''\rightarrow C\times X-Y'$ be a closed immersion and denoting by $H^{BM}_{*}$ the Borel-Moore homology, the middle vertical map $(\cup \Gamma)'$ is the composition
\begin{center}
\begin{tikzcd}[row sep=normal, column sep=small]
H^{1}(C\times X-Y', \Z(1)) \arrow[r,"{i^*}"]
&  H^{1}(G-Y'', \Z(1))\arrow[r,"{\cap (\Gamma|_{G-Y''})}"] 
\arrow[d, phantom, ""{coordinate, name=Z}]
& H_{2\dim G -1}^{BM}(G -Y'', \Z(\dim G-1)) \arrow[dl,
"i_*"',
rounded corners,
to path={ -- ([xshift=2ex]\tikztostart.east)
|- (Z) [near end]\tikztonodes
-| ([xshift=-2ex]\tikztotarget.west)
-- (\tikztotarget)}] \\
&H_{2\dim G -1}^{BM}(C\times X-Y'', \Z(\dim G-1))\arrow[r,equal]&H^{2p+1}(C\times X-Y'', \Z(p)).
\end{tikzcd}
\end{center}
Since the images of the vertical maps are supported on $G$, we have another commutative diagram
\begin{equation}\label{Ad2}
\xymatrixcolsep{1pc}
\xymatrix{
0 \ar[r] & H^{1}(C\times X, \Z(1)) \ar[r] \ar[d]^{\cup\Gamma} & H^{1}(C\times X- Y', \Z(1)) \ar[r]\ar[d]^{(\cup\Gamma)'} &Z^{1}_{Y'}(C\times X)_{\hom}\ar[r]\ar[d]^{\cup \Gamma} & 0\\
0 \ar[r] & N^{p}H^{2p+1}(C\times X, \Z(p+1)) \ar[r] & N^{p}H^{2p+1}(C\times X- Y'', \Z(p+1)) \ar[r] &Z^{p+1}_{Y''}(C\times X)_{\alg}\ar[r] & 0.\\
}
 \end{equation}
Finally, setting $Y'''=\pi_{X}(Y'')$ and letting $j\colon C\times X- \pi_{X}^{-1}(Y''')\rightarrow C\times X-Y''$ be an open immersion,
 we have a commutative diagram
\[
\xymatrixcolsep{1pc}
\xymatrix{
0 \ar[r] & H^{2p+1}(C\times X, \Z(p+1)) \ar[r] \ar[d]^{(\pi_{X})_{*}} & H^{2p+1}(C\times X- Y'', \Z(p+1)) \ar[r]\ar[d]^{(\pi_{X})_{*}j^{*}} &Z^{2p+1}_{Y''}(C\times X)_{\hom}\ar[r]\ar[d]^{(\pi_{X})_{*}} & 0\\
0 \ar[r] & H^{2p-1}(X, \Z(p)) \ar[r] & H^{2p-1}(X- Y''' , \Z(p)) \ar[r] &Z^{p}_{Y'''}(X)_{\hom}\ar[r] & 0,\\
}
 \]
 which restricts to
\begin{equation}\label{Ad3}
\xymatrixcolsep{1pc}
\xymatrix{
0 \ar[r] & N^{p}H^{2p+1}(C\times X, \Z(p+1)) \ar[r] \ar[d]^{(\pi_{X})_{*}} & N^{p}H^{2p+1}(C\times X- Y'', \Z(p+1)) \ar[r]\ar[d]^{(\pi_{X})_{*}j^{*}} &Z^{2p+1}_{Y''}(C\times X)_{\alg}\ar[r]\ar[d]^{(\pi_{X})_{*}} & 0\\
0 \ar[r] & N^{p-1}H^{2p-1}(X, \Z(p)) \ar[r] & N^{p-1}H^{2p-1}(X- Y''' , \Z(p)) \ar[r] &Z^{p}_{Y'''}(X)_{\alg}\ar[r] & 0.\\
}
 \end{equation}
By the diagrams (\ref{Ad1}), (\ref{Ad2}), and (\ref{Ad3}), we have a commutative diagram
\[
\xymatrixcolsep{1pc}
\xymatrix{
0 \ar[r] & H^{1}(C, \Z(1)) \ar[r] \ar[d]^{\Gamma_{*}} & H^{1}(C- Y, \Z(1)) \ar[r]\ar[d]^{(\pi_{X})_{*}j^{*}(\cup \Gamma)' (\pi_{C})^{*}} &Z^{1}_{Y}(C)_{\hom}\ar[r]\ar[d]^{\Gamma_{*}} & 0\\
0 \ar[r] & N^{p-1}H^{2p-1}(X, \Z(p)) \ar[r] & N^{p-1}H^{2p-1}(X- Y''' , \Z(p)) \ar[r] &Z^{p}_{Y'''}(X)_{\alg}\ar[r] & 0.\\
}
 \]
 This is a commutative diagram of mixed Hodge structures.
 The assertion follows by applying $\Ext^{i}_{\MHS}(\Z(0),-)$ and taking the direct limit.
 \end{proof}
 
\begin{cor}\label{Ac1}
The Walker map $\widetilde{\psi}^{p}$ factors through $A^{p}(X)$.
Moreover we have a commutative diagram
 \[
 \xymatrix{
 A^{p}(X)\ar[r]^<<<<<<{\widetilde{\psi}^{p}}\ar@{^{(}->}[d]& J\left(N^{p-1}H^{2p-1}(X,\Z(p))\right)\ar[d]^{\pi^{p}}\\
 CH^{p}(X)_{\hom}\ar[r]^{AJ^{p}} & J^{p}(X).\\
 }
 \]
 \end{cor}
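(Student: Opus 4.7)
The plan is to derive Corollary \ref{Ac1} from Lemmas \ref{Al0} and \ref{Al1}. Given those lemmas, the only real content is to verify that $\widetilde{\psi}^p$ annihilates $Z^p(X)_{\rat}$; the commutative square then follows by passing to the quotient in Lemma \ref{Al0}.

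For the vanishing statement, I would exploit the standard presentation of rational equivalence: every $\alpha \in Z^p(X)_{\rat}$ is a finite sum of cycles of the form $\pi_{X*}\bigl([W]\cdot(\{0\}\times X - \{\infty\}\times X)\bigr)$, where $W\subset \mathbb{P}^1\times X$ is an integral codimension-$p$ subvariety dominating $\mathbb{P}^1$. Writing $z$ for the standard coordinate on $\mathbb{P}^1$ and taking $C=\mathbb{P}^1$, $\Gamma=[W]$ in Lemma \ref{Al1}, each such summand equals $\Gamma_*(\mathrm{div}(z))$. Since $\mathrm{div}(z)$ is a principal divisor and $J^1(\mathbb{P}^1) = 0$, we have $AJ^1(\mathrm{div}(z)) = 0$, so Lemma \ref{Al1} yields
\[
\widetilde{\psi}^p\bigl(\Gamma_*(\mathrm{div}(z))\bigr) = \Gamma_*\bigl(AJ^1(\mathrm{div}(z))\bigr) = 0.
\]
By linearity, $\widetilde{\psi}^p$ vanishes on $Z^p(X)_{\rat}$ and hence descends to a map $A^p(X)\to J\bigl(N^{p-1}H^{2p-1}(X,\Z(p))\bigr)$.

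For the commutative diagram, I would use that the classical Abel--Jacobi map $AJ^p$ factors through $CH^p(X)_{\hom}$ (as recalled in the introduction), and that the inclusion $Z^p(X)_{\alg}\hookrightarrow Z^p(X)_{\hom}$ descends to the inclusion $A^p(X)\hookrightarrow CH^p(X)_{\hom}$ because $Z^p(X)_{\rat}\subseteq Z^p(X)_{\alg}$. Passing to quotients in the square of Lemma \ref{Al0} then yields the desired square.

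Beyond Lemmas \ref{Al0} and \ref{Al1}, the only external input is the standard description of rational equivalence via subvarieties of $\mathbb{P}^1\times X$, so I do not anticipate any genuine obstacle; the corollary is essentially a formal consequence of the two lemmas, with the bulk of the work having already been carried out in the proof of Lemma \ref{Al1}.
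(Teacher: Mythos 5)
Your proposal is correct and follows essentially the same route as the paper: the paper likewise applies Lemma \ref{Al1} with $C=\P^{1}$ to all codimension $p$ cycles $\Gamma$ on $\P^{1}\times X$ dominating $\P^{1}$, uses $J^{1}(\P^{1})=0$ together with the fact that the images $\Gamma_{*}\bigl(Z^{1}(\P^{1})_{\hom}\bigr)$ generate $Z^{p}(X)_{\rat}$ to conclude that $\widetilde{\psi}^{p}$ kills rational equivalence, and then obtains the commutative square from Lemma \ref{Al0}.
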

 \begin{proof}
By Lemma \ref{Al1}, we have a commutative diagram
  \[
 \xymatrix{
 \bigoplus_{\Gamma} Z^{1}(\P^{1})_{\hom}\ar[r]^{(AJ^{1})}\ar[d]^{(\Gamma_{*})}& \bigoplus_{\Gamma} J^{1}(\P^{1})=0 \ar[d]^{(\Gamma_{*})}\\
 Z^{p}(X)_{\alg}\ar[r]^>>>>>>{\widetilde{\psi}^{p}} & J\left(N^{p-1}H^{2p-1}(X,\Z(p))\right),\\
 }
 \]
 where $\Gamma$ runs through all codimension $p$ cycles on $\P^{1}\times X$ with the components dominating $\P^{1}$.
Since the image of the left vertical map is the subgroup $Z^{p}(X)_{\rat}\subset Z^{p}(X)$, the first assertion follows. 
The second assertion is immediate by using Lemma \ref{Al0}.
 \end{proof}
 
The source of the Walker map $\widetilde{\psi}^{p}$ will be $A^{p}(X)$ in the following.
 
 \begin{lem}\label{Al2}
 The Walker map $\widetilde{\psi}^{p}$ is functorial for correspondences:
we have a commutative diagram
 \[
 \xymatrix{
 A^{q}(X')\ar[r]^<<<<<<{\widetilde{\psi}^{q}}\ar[d]^{\Gamma_{*}}& J\left(N^{q-1}H^{2q-1}(X',\Z(q))\right)\ar[d]^{\Gamma_{*}}\\
 A^{p}(X)\ar[r]^<<<<<<{\widetilde{\psi}^{p}} & J\left(N^{p-1}H^{2p-1}(X,\Z(p))\right)\\
 }
\]
 for any smooth projective varieties $X, X'$ and codimension $(p-q+\dim X')$ cycle $\Gamma$ on $X'\times X$.
 \end{lem}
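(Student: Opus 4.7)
The plan is to mimic the proof of Lemma \ref{Al1}, replacing the curve $C$ by the general smooth projective variety $X'$ and the codimension $p$ cycle $\Gamma$ on $C\times X$ by the codimension $(p-q+\dim X')$ cycle $\Gamma$ on $X'\times X$. As there, I will realize the vertical maps $\Gamma_{*}$ as pullback by $\pi_{X'}$, followed by cup product with $\Gamma$, followed by proper pushforward by $\pi_{X}$, and check that this defines a morphism of the defining short exact sequences of mixed Hodge structures for the Walker maps in the direct limit.

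More precisely, for a codimension $q$ closed subset $Y\subset X'$, I would set $Y'=\pi_{X'}^{-1}(Y)$, $G=\Supp(\Gamma)$, $Y''=Y'\cap G$, and $Y'''=\pi_{X}(Y'')$. Following the three diagrams (\ref{Ad1}), (\ref{Ad2}), (\ref{Ad3}) in the proof of Lemma~\ref{Al1}, I would assemble the analogous three diagrams of short exact sequences of mixed Hodge structures with vertical maps $(\pi_{X'})^{*}$, $(\cup\Gamma)'$ (the cup product factoring through restriction to $G-Y''$, Poincar\'e duality and $i_{*}$, exactly as in the tikzcd diagram), and $(\pi_{X})_{*}j^{*}$, where $j\colon X'\times X-\pi_{X}^{-1}(Y''')\hookrightarrow X'\times X-Y''$. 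Composing vertically, one obtains a map of short exact sequences
\begin{equation*}
\xymatrixcolsep{1pc}
\xymatrix{
0 \ar[r] & N^{q-1}H^{2q-1}(X', \Z(q)) \ar[r] \ar[d] & N^{q-1}H^{2q-1}(X'- Y, \Z(q)) \ar[r]\ar[d] &Z^{q}_{Y}(X')_{\alg}\ar[r]\ar[d]^{\Gamma_{*}} & 0\\
0 \ar[r] & N^{p-1}H^{2p-1}(X, \Z(p)) \ar[r] & N^{p-1}H^{2p-1}(X- Y''' , \Z(p)) \ar[r] &Z^{p}_{Y'''}(X)_{\alg}\ar[r] & 0,\\
}
\end{equation*}
which is a morphism of mixed Hodge structures. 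Applying $\Ext^{i}_{\MHS}(\Z(0),-)$ and passing to the direct limit over $Y\in\mathcal{Z}^{q}(X')$ gives the claimed commutativity.

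The one step that requires an argument beyond bookkeeping is that the middle and left vertical composites land in the coniveau subspaces $N^{p-1}H^{2p-1}(X-Y''',\Z(p))$ and $N^{p-1}H^{2p-1}(X,\Z(p))$ respectively; in Lemma~\ref{Al1} this was immediate since cup product with $\Gamma$ factored through a class supported on $G$, and pushforward by $\pi_{X}$ then produces a class supported on $Y'''=\pi_X(Y'')$, a subset of codimension at least $p-1$ in $X$. The identical support argument works here, since the factorization of $(\cup\Gamma)'$ through $i^{*}$ to $H^{*}(G-Y'',\Z)$ again forces the image to be supported on $G$, and $\pi_{X}(G)$ has codimension at least $p-q+\dim X'-\dim X'=p-q$ in $X$, which is at least $p-1$ once combined with the codimension $q\ge 1$ contribution of the subset $Y$ dropping the degree—more carefully, after cupping with a class supported on $Y'\subset X'\times X$ the image lies in $N^{q}$, which together with $\Gamma$ gives a class with support of codimension $\ge p$, so after pushforward and restriction to $X-Y'''$ we land in $N^{p-1}$. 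This coniveau bookkeeping is the main, though routine, obstacle; the rest is a direct generalization of Lemma~\ref{Al1}.
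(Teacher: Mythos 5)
Your overall strategy --- redo the proof of Lemma~\ref{Al1} with $C$ replaced by $X'$ and keep track of supports --- is exactly the paper's strategy, but you are missing the one genuinely new ingredient that makes the generalization work: a general-position argument. In Lemma~\ref{Al1} the hypothesis that every component of $\Gamma$ dominates $C$ guarantees for free that $\Gamma$ meets $\pi_C^{-1}(Y)$ and the fibres properly, so that $Y''=Y'\cap G$ and $Y'''=\pi_X(Y'')$ have the expected codimensions. For a general $X'$ and a general correspondence $\Gamma$ nothing of the sort is automatic: $G\cap Y'$ can have excess dimension, in which case $Y'''$ need not have codimension $p$ in $X$ (so it is not even an admissible index for the direct limit defining $\widetilde{\psi}^p$), the cycle-level map $Z^q_Y(X')_{\alg}\to Z^p_{Y'''}(X)_{\alg}$ is not defined by proper intersection, and the support estimates underlying your ``coniveau bookkeeping'' fail. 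The paper fixes this by observing that $\Gamma_*$ depends only on the rational equivalence class of $\Gamma$ and then invoking the moving lemma to replace $\Gamma$ by a rationally equivalent cycle meeting the finitely many relevant subsets properly; your write-up never addresses this, and all of your dimension counts tacitly assume properness.

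A secondary imprecision: your justification that the middle vertical map lands in $N^{p-1}H^{2p-1}(X-Y''',\Z(p))$ is garbled. Being supported on $G$ alone only gives codimension $p-q+\dim X'$ in $X'\times X$, hence codimension $p-q$ after $(\pi_X)_*$, which is insufficient for $q\geq 2$. The correct count uses that a class $\alpha\in N^{q-1}H^{2q-1}(X'-Y,\Z(q))$ is supported on a codimension $\geq q-1$ subset $W$ of $X'$ (not on $Y'$, as you write), so that $(\pi_{X'})^*\alpha\cup\Gamma$ is supported on $G\cap\pi_{X'}^{-1}(W)$, of codimension $(q-1)+(p-q+\dim X')=p-1+\dim X'$ when the intersection is proper, whence codimension $\geq p-1$ after pushforward to $X$. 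Again this is exactly where general position is needed, and since $H^{2q-1}(X',\Z)$ is finitely generated only finitely many such $W$ must be handled, which is what makes the moving lemma applicable.
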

 \begin{proof}
Note that the action of $\Gamma$ only depends on its rational equivalence class on $X'\times X$, 
hence we are allowed to use the moving lemma to ensure that $\Gamma$, after moving, comes to intersect properly with finitely many chosen cycles.
Now the result follows from an argument similar to that of Lemma \ref{Al1}, where we may 
always 
assume that 
involved closed subsets have the correct dimensions.
 \end{proof}
 
 \begin{cor}\label{Ac2}
The Walker map $\widetilde{\psi}^{p}$ is surjective. 
Moreover $J(N^{p-1}H^{2p-1}(X,\Z(p)))$ is an abelian variety.
\end{cor}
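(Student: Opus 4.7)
The plan is to use Lemma~\ref{Al1} to realize the image of $\widetilde{\psi}^{p}$ as a subabelian variety swept out by smooth curves, and then force it to fill the ambient torus via a dimension count through $\pi^{p}$. On a smooth projective curve $C$ one has $N^{0}H^{1}(C,\Z(1))=H^{1}(C,\Z(1))$ tautologically, so $\widetilde{\psi}^{1}$ coincides with $AJ^{1}$. Lemma~\ref{Al1} then shows that for every codimension $p$ cycle $\Gamma$ on $C\times X$ with components dominating $C$, the image of $\widetilde{\psi}^{p}$ contains the subabelian variety $\Gamma_{*}(J^{1}(C))$ of the complex torus $J(N^{p-1}H^{2p-1}(X,\Z(p)))$. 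Since by the definition of algebraic equivalence every class in $A^{p}(X)$ is of the form $\Gamma_{*}([s_{1}]-[s_{0}])$ for some such curve family, this image equals the union $\sum_{(C,\Gamma)}\Gamma_{*}(J^{1}(C))$, which stabilizes on dimension grounds to a subabelian variety $A\subseteq J(N^{p-1}H^{2p-1}(X,\Z(p)))$.

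The key next step is to show that $\pi^{p}$ has finite kernel. This is purely Hodge-theoretic: the inclusion of pure Hodge structures of weight $-1$ given by $N^{p-1}H^{2p-1}(X,\Z(p))\hookrightarrow H^{2p-1}(X,\Z(p))$ is strict for the Hodge filtration, so the induced map on tangent spaces of the intermediate Jacobians (quotients by $F^{0}$) is injective. Combined with Lemma~\ref{Al0}, which gives $\pi^{p}(A)=\psi^{p}(A^{p}(X))=J^{p}_{a}(X)$, the restriction $\pi^{p}|_{A}\colon A\twoheadrightarrow J^{p}_{a}(X)$ is then an isogeny, and hence $\dim A=\dim J^{p}_{a}(X)$.

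To close the argument I would appeal to Lieberman's classical identification $T_{0}J^{p}_{a}(X)=N^{p-1}H^{p-1,p}(X)$, which matches the Hodge-theoretic computation $T_{0}J(N^{p-1}H^{2p-1}(X,\Z(p)))=N^{p-1}H^{p-1,p}(X)$, so $\dim A=\dim J(N^{p-1}H^{2p-1}(X,\Z(p)))$. A connected closed subgroup of full dimension in a complex torus must equal the ambient torus, so $A=J(N^{p-1}H^{2p-1}(X,\Z(p)))$, yielding simultaneously the surjectivity of $\widetilde{\psi}^{p}$ and the abelian-variety structure on the target (inherited from the subabelian variety $A$). The main obstacle is precisely this Lieberman input, which guarantees that the curve-supported part of $N^{p-1}H^{p-1,p}(X)$ is not strictly smaller than the full coniveau piece.
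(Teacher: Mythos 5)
The reductions you set up are sound: by Lemma~\ref{Al1} and Jacobi inversion on curves, the image of $\widetilde{\psi}^{p}$ is an abelian subvariety $A$ of the complex torus $J\left(N^{p-1}H^{2p-1}(X,\Z(p))\right)$ (a stabilizing sum of images $\Gamma_{*}(J^{1}(C))$); $\pi^{p}$ has finite kernel by strictness of morphisms of pure Hodge structures; and Lemma~\ref{Al0} gives $\pi^{p}(A)=J^{p}_{a}(X)$, hence $\dim A=\dim J^{p}_{a}(X)$. The genuine gap is the step you yourself flag as the main obstacle: the identity $T_{0}J^{p}_{a}(X)=N^{p-1}H^{2p-1}(X,\C)/F^{0}$ is not a citable classical fact, and in particular is not in Lieberman's paper, which establishes that $J^{p}_{a}(X)$ is an abelian variety and that $\psi^{p}$ is regular, but says nothing about the coniveau filtration. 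The inclusion $T_{0}J^{p}_{a}(X)\subseteq N^{p-1}H^{p-1,p}(X)$ is the easy direction (the classes $\Gamma_{*}H^{1}(C)$ are supported on $\pi_{X}(\Supp\Gamma)$, which has codimension $\geq p-1$); the reverse inclusion is exactly the substance of the corollary, namely that \emph{all} of $N^{p-1}H^{2p-1}$ is realized by Abel--Jacobi images of algebraically trivial cycles. Invoking it as known makes the argument circular.

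What is actually needed --- and what the paper supplies --- is the following. Choose $Z\subset X$ of codimension $p-1$ with $H^{2p-1}_{Z}(X,\Z(p))\twoheadrightarrow N^{p-1}H^{2p-1}(X,\Z(p))$, take a resolution $\widetilde{Z}$ with components $\widetilde{Z}_{i}$, and check that $\bigoplus_{i}H^{1}(\widetilde{Z}_{i},\Z(1))\to H^{2p-1}_{Z}(X,\Z(p))$ is injective with cokernel of trivial Hodge structure; by right exactness of the functor $J(-)$ this yields a surjection $\bigoplus_{i}J^{1}(\widetilde{Z}_{i})\twoheadrightarrow J\left(N^{p-1}H^{2p-1}(X,\Z(p))\right)$ induced by the graphs $\Gamma_{i}$ of $\widetilde{Z}_{i}\to X$. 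Then Lemma~\ref{Al2} together with the isomorphism $AJ^{1}\colon CH^{1}(\widetilde{Z}_{i})_{\hom}\xrightarrow{\sim}J^{1}(\widetilde{Z}_{i})$ transports this surjection to $\widetilde{\psi}^{p}$. Note that this argument works integrally and lands directly in $J\left(N^{p-1}H^{2p-1}(X,\Z(p))\right)$, so one never needs to pass through $J^{p}_{a}(X)$ or run a dimension count; it is also, in effect, the only available proof of the ``Lieberman input'' your proposal is missing, so supplying it would collapse your detour back onto the paper's route.
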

 \begin{proof}
Let $Z\subset X$ be a closed subset of codimension $p-1$ such that 
the natural map
 \[
 H^{2p-1}_{Z}(X,\Z(p))\rightarrow H^{2p-1}(X,\Z(p))
 \]
induces a surjection
\[
H^{2p-1}_{Z}(X,\Z(p))\rightarrow N^{p-1}H^{2p-1}(X,\Z(p)).
\]
By the right exactness of the intermediate Jacobian functor $J(-)$ \cite{Bei}, we have a surjection
\begin{eqnarray}\label{As1}
J\left(H^{2p-1}_{Z}(X,\Z(p))\right)\rightarrow J\left(N^{p-1}H^{2p-1}(X,\Z(p))\right).
\end{eqnarray}
 Let $\widetilde{Z}$ be a resolution of $Z$ and $\widetilde{Z}_{i}$ be the components of $\widetilde{Z}$.
 An easy computation shows that the natural map
 \[
 \bigoplus_{i} H^{1}(\widetilde{Z}_{i},\Z(1))\rightarrow H^{2p-1}_{Z}(X,\Z(p))
 \]
 is an injection with the cokernel having the trivial Hodge structure.
 This induces a surjection
 \begin{eqnarray}\label{As2}
 \bigoplus_{i} J^{1}(\widetilde{Z}_{i})\rightarrow J\left(H^{2p-1}_{Z}(X,\Z(p))\right).
 \end{eqnarray}
Then we combine (\ref{As1}) and (\ref{As2}) to obtain a surjection
 \[
 \bigoplus_{i} J^{1}(\widetilde{Z}_{i})\rightarrow J\left(N^{p-1}H^{2p-1}(X,\Z(p))\right),
 \]
which coincides with the map induced by the graphs $\Gamma_{i}$ of $\widetilde{Z}_{i}\rightarrow Z \rightarrow X$.
By Lemma \ref{Al2}, we have a commutative diagram
 \[
\xymatrix{
\bigoplus_{i} CH^{1}(\widetilde{Z}_{i})_{\hom} \ar[r]^{AJ^{1}}_{\sim}\ar[d]^{((\Gamma_{i})_{*})}& \bigoplus_{i} J^{1}(\widetilde{Z}_{i}) \ar@{->>}[d]^{((\Gamma_{i})_{*})}\\
 A^{p}(X)\ar[r]^>>>>>>{\widetilde{\psi}^{p}} & J\left(N^{p-1}H^{2p-1}(X,\Z(p))\right).\\
}
 \]
The results follow.
 \end{proof}

 \begin{cor}\label{Ac3}
  The Walker map $\widetilde{\psi}^{p}$ is regular.
 \end{cor}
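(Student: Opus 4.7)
The plan is to establish regularity via a sequence of reductions, culminating in a finite \'etale descent argument. Throughout, let $S$ be a smooth connected projective variety of dimension $n$ with base point $s_0$, and $\Gamma \in Z^p(S \times X)$; set $J := J(N^{p-1}H^{2p-1}(X, \Z(p)))$ and $J_S := J(N^{n-1}H^{2n-1}(S, \Z(n)))$.

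First, I would apply Lemma~\ref{Al2} to $\Gamma$, viewed as a codimension $p$ correspondence from $S$ to $X$, to obtain
\[
\widetilde{\psi}^p(\Gamma_s - \Gamma_{s_0}) = \Gamma_*\bigl(\widetilde{\psi}^n(s - s_0)\bigr),
\]
where $\Gamma_*\colon J_S \to J$ is a morphism of abelian varieties (induced by a morphism of mixed Hodge structures, with both targets abelian varieties by Corollary~\ref{Ac2}). It therefore suffices to show that $\psi\colon S \to J_S$, $s \mapsto \widetilde{\psi}^n(s - s_0)$, is a morphism; this reduces us to the ``universal'' case $X = S$.

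Next, I would verify this on curves. For any morphism $g\colon C \to S$ from a smooth projective curve with base point $c_0$, applying Lemma~\ref{Al1} to the graph $\Gamma_g \in Z^n(C \times S)$ (whose unique component dominates $C$) gives $\psi \circ g(c) = g_*(\alb_C(c - c_0)) + \widetilde{\psi}^n(g(c_0) - s_0)$, a morphism. To globalize, invoke Lemma~\ref{Al0}: $\pi^n \circ \psi = \alb_S$, which is a morphism. A standard Lefschetz argument (the iterated Gysin map $H^1(C_0, \Q) \to H^{2n-1}(S, \Q(n-1))$ is surjective for $C_0 \subset S$ a smooth complete intersection curve) gives $N^{n-1}H^{2n-1}(S, \Q) = H^{2n-1}(S, \Q)$, so $\pi^n$ has finite kernel and is an isogeny. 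Then $\widetilde{S} := S \times_{\Alb(S)} J_S$ is a finite \'etale cover of $S$, and $\psi$ determines a set-theoretic section $\sigma\colon S \to \widetilde{S}$.

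The main obstacle is the final \'etale descent: showing that this set section $\sigma$, algebraic on every curve, is itself a morphism. I would pick $C_0$ through $s_0$ with $\pi_1(C_0) \twoheadrightarrow \pi_1(S)$ (Lefschetz for fundamental groups). The connected component $\widetilde{S}_0 \subset \widetilde{S}$ containing $\sigma(s_0)$ then remains connected after pullback to $C_0$; on the other hand, $\sigma|_{C_0}$ is a morphism and gives a section of $\widetilde{S}_0 \times_S C_0 \to C_0$, forcing this pullback cover---and hence $\widetilde{S}_0 \to S$ itself---to be of degree one. Any $s \in S$ can be joined to $s_0$ by an irreducible curve $g\colon C \to S$ (Bertini), and the algebraicity of $\sigma \circ g$ shows $\sigma(s) \in \widetilde{S}_0$. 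Consequently, $\sigma$ factors as the inverse of the isomorphism $\widetilde{S}_0 \xrightarrow{\sim} S$ composed with the inclusion, hence is a morphism, and therefore $\widetilde{\psi}^p$ is regular.
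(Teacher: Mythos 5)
Your proof is correct, and its first step --- using Lemma~\ref{Al2} to reduce regularity to the algebraicity of $s\mapsto\widetilde{\psi}^{\dim S}(s-s_{0})$ as a map $S\to J_S$ (in your notation) --- is exactly the paper's. Where you genuinely diverge is in how this zero-cycle case is settled. The paper observes that $N^{\dim S-1}H^{2\dim S-1}(S,\Z(\dim S))=H^{2\dim S-1}(S,\Z(\dim S))$ holds \emph{integrally}: Poincar\'e duality identifies $H^{2\dim S-1}(S,\Z)$ with $H_{1}(S,\Z)$, and the Lefschetz theorem for $\pi_{1}$ (which you invoke anyway) makes $H_{1}(C_{0},\Z)\to H_{1}(S,\Z)$ surjective for a complete intersection curve $C_{0}$. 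Hence $J_S=J^{\dim S}(S)=\Alb(S)$, $\widetilde{\psi}^{\dim S}=AJ^{\dim S}$ is the Albanese map, and the composite $S\to J$ is $\Gamma_{*}\circ\alb_{S}$, a morphism with no further work. You, by contrast, use only the rational statement, concluding merely that $\pi^{\dim S}$ is an isogeny, and then recover algebraicity of the set-theoretic lift $\sigma$ of $\alb_{S}$ by restricting to curves via Lemma~\ref{Al1} and running a finite \'etale descent through $S\times_{\Alb(S)}J_S$, using Lefschetz for $\pi_{1}$ and Bertini connectedness. That descent is sound (the one point to make explicit is that the irreducible curve joining $s$ to $s_{0}$ should be replaced by its normalization before applying Lemma~\ref{Al1}, which requires a smooth projective source), and it has the independent merit of showing how to promote ``algebraic on every curve'' to ``algebraic'' for a lift along an isogeny; but it is machinery the paper avoids entirely by upgrading your Lefschetz input from $\Q$- to $\Z$-coefficients.
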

\begin{proof}
 By Lemma \ref{Al2},  we have a commutative diagram  
 \[
\xymatrix{
CH^{\dim S}(S)_{\hom}\ar[r]^{AJ^{\dim S}}\ar[d]^{\Gamma_{*}}& J^{\dim S}(S)\ar[d]^{\Gamma_{*}}\\
 A^{p}(X)\ar[r]^<<<<<<{\widetilde{\psi}^{p}} & J\left(N^{p-1}H^{2p-1}(X,\Z(p))\right)\\
 }
 \]
 for any smooth projective variety $S$ and codimension $p$ cycle $\Gamma$ on $S\times X$. 
Now the result is immediate using the Albanese map on $S$.
\end{proof}
\begin{rem}
The referee points out that one can slightly modify the definition of a regular homomorphism by allowing $S$ to be any smooth connected (not necessarily projective nor proper) variety.
In fact, the Walker map $\widetilde{\psi}^{p}$ is again regular in this sense: 
the Nagata compactification reduces the assertion to the proper case, for which the same proof as that of Corollary \ref{Ac3} works.
\end{rem}
 
 \begin{proof}[Proof of Theorem \ref{UAJw}]
The Walker map $\widetilde{\psi}^{p}$ constructed in Subsection 2.1 gives a factorization of the Abel--Jacobi map $\psi^{p}$ with desired properties, as shown by Corollaries \ref{Ac1}, \ref{Ac2}, and \ref{Ac3}.
The proof of Theorem \ref{UAJw} is now complete.
\end{proof}

\section{Questions}

\begin{q}
Is the Walker map $\widetilde{\psi}^{p}\colon A^{p}(X)\rightarrow J\left(N^{p-1}H^{2p-1}(X,\Z(p))\right)$ always universal among all regular homomorphisms?
\end{q}

This is related to another question of Murre:
\begin{q}[\protect{\cite[p.~132]{GMV}}]
Does there always exist a universal regular homomorphism?
\end{q}

\end{document}